\newtheorem{theorem}{Theorem}[section]
\newtheorem{lemma}{Lemma}[section]
\newtheorem{corollary}{Corollary}[section]
\newcommand{\be}{\begin{equation}}
\newcommand{\ee}{\end{equation}}
\newcommand{\bea}{\begin{eqnarray}}
\newcommand{\eea}{\end{eqnarray}}
\newcommand{\beas}{\begin{eqnarray*}}
\newcommand{\eeas}{\end{eqnarray*}}
\begin{document}
\setcounter{page}{1} \setlength{\unitlength}{1mm}\baselineskip
.58cm \pagenumbering{arabic} \numberwithin{equation}{section}

\title[Riemannian manifold]{Investigations on a Riemannian manifold with a semi-symmetric non-metric connection and gradient solitons}

\author[ K. De$^{*}$, U. C. De and A. Gezer ]
{ Krishnendu De $^{*}$, Uday Chand De and Aydin Gezer}

\address
 {$^{*}$ Department of Mathematics,
 Kabi Sukanta Mahavidyalaya,
The University of Burdwan.
Bhadreswar, P.O.-Angus, Hooghly,
Pin 712221, West Bengal, India. ORCID iD: https://orcid.org/0000-0001-6520-4520}
\email{krishnendu.de@outlook.in }
\address
{ Department of Pure Mathematics, University of Calcutta, West Bengal, India. ORCID iD: https://orcid.org/0000-0002-8990-4609}
\email {uc$_{-}$de@yahoo.com}

\address
{  Department of Mathematics, Ataturk University, Erzurum-TURKEY}
	\email {aydingzr@gmail.com}

\footnotetext {$\bf{2020\ Math.\ Subject\ Classification\:}.$ 53C05, 53C20, 53C25.
\\ {Key words: Riemannian manifolds, gradient Ricci solitons, gradient Yamabe solitons, gradient Einstein solitons, $m$-quasi Einstein solitons.\\
\thanks{$^{*}$ Corresponding author}
}}
\maketitle

\vspace{1cm}

\begin{abstract}
This article carries out the investigation of a three-dimensional Riemannian manifold $N^3$ endowed with a semi-symmetric type non-metric connection. Firstly, we construct a non-trivial example to prove the existence of a semi-symmetric type non-metric connection on $N^{3}$. It is established that a $N^3$ with the semi-symmetric type non-metric connection, whose metric is a gradient Ricci soliton, is a manifold of constant sectional curvature with respect to the semi-symmetric type non-metric connection. Moreover, we prove that if the Riemannian metric of $N^3$ with the semi-symmetric type non-metric connection is a gradient Yamabe soliton, then either $N^{3}$ is a manifold of constant scalar curvature or the gradient Yamabe soliton is trivial with respect to the semi-symmetric type non-metric connection. We also characterize the manifold $N^3$ with a semi-symmetric type non-metric connection whose metrics are Einstein solitons and $m$-quasi Einstein solitons of gradient type, respectively.

\end{abstract}

\maketitle

\section{Introduction}
In this paper, on a Riemannian manifold $N^3$, we carry out an investigation of gradient solitons with a semi-symmetric type non-metric connection (briefly, $SSNMC$). Many years ago, on a differentiable manifold, Friedman and Schouten \cite {fa} presented the concept of semi-symmetric linear connection. After that in 1932, on a Riemannian manifold, Hayden \cite {hha} introduced the notion of metric connection with torsion. In 1970, a systematic investigation of semi-symmetric metric connection which plays a significant role in the study of Riemannian manifolds was conducted by Yano \cite{ya1}. In this connection, we may mention the work of Zengin et al. (\cite{zen1}, \cite{zen2}).\par
On $N^3$, a linear connection $\widehat{\nabla}$ is named semi-symmetric if $\widehat{T}$, the torsion tensor defined by
\begin{equation*}
    \widehat{T}(U,V)=\widehat{\nabla}_{U}V-\widehat{\nabla}_{V}U-[U,V]
\end{equation*}
 obeys
\begin{equation}\label{a1} \widehat{T}(U,V)=\mathfrak{\psi} (V) U-\mathfrak{\psi} (U) V,\end{equation}
where $\mathfrak{\psi}$ is a $1$-form defined by $\mathfrak{\psi}(U)=g(U,\xi)$, for a fixed vector field $\xi$ (the associated vector field of $\widehat{\nabla}$). If in the right side of the equation (\ref{a1}) we substitute the independent vector fields $U$ and $V$, respectively, by $\phi U$ and $\phi V$, where $\phi$ is a (1,1)- tensor field\cite{go}, then the connection $\widehat{\nabla}$ transforms into a quarter-symmetric connection.\par

Again, if a semi-symmetric connection $\widehat{\nabla}$ on $N^3$ obeys
\begin{equation}\label{a2}(\widehat{\nabla}_{U}g)(V,Y) = 0,\end{equation}
then $\widehat{\nabla}$ is called metric \cite{ya1}. If $\widehat{\nabla}g \neq0$, then it is called non-metric \cite{hha}.
Here, we choose the $SSNMC$, that is, $\widehat{\nabla}g \neq0$ and the connection $\widehat{\nabla}$ obeys the equation (\ref{a1}). The concept of the $SSNMC$ on a Riemannian manifold was investigated in \cite{aga}. After that, several researchers investigated the properties of $SSNMC$ on manifolds with different structures (see, \cite{dea}, \cite{dog}, \cite{ozg}, \cite{sen}).\par

Hamilton \cite{rsh2} introduces the concept of Ricci flow as a solution to the challenge of obtaining a canonical metric on a smooth manifold. Ricci flow occurs when the metric of a Riemannian manifold $N^3$ is fulfilled by the evolution equation $\frac{\partial}{\partial t}g_{ij}(t)=-2\mathcal{S}_{ij}$, where $\mathcal{S}_{ij}$ and $g_{ij}$ are the components of the Ricci tensor and the metric tensor, respectively. Ricci solitons were created via self-similar solutions to the Ricci flow.\par

A metric of $N^3$ is named a Ricci soliton \cite{rsh1} if it fulfills
\begin{equation}
\label{4}
\mathfrak{L}_{W}g +2\lambda g+2\widehat{\mathcal{S}}=0,
\end{equation}
for some $\lambda \in \mathbb{R}$, the set of real numbers. Here, $\mathfrak{L}$ being the Lie derivative operator and $\widehat{\mathcal{S}}$ is the Ricci tensor with respect to the non-metric connection $\widehat{\nabla}$. $W$ is a complete vector field known as a potential vector field. The Ricci soliton is considered to be shrinking, expanding or steady depending on whether $\lambda$ is negative, positive, or zero. If $W$ is Killing or zero, the Ricci soliton is trivial and $N^3$ is Einstein. Also, if $W=Df$ for some smooth function $f$, then equation (\ref{4}) turns into
\begin{equation}
\label{a4}
\widehat{\nabla}^2 \, f+\widehat{\mathcal{S}}+\lambda g=0,
\end{equation}
where $\widehat{\nabla}^2$ and $D$ indicate the Hessian and the gradient operator of $g$, respectively. The metric obeying the equation (\ref{a4}) is called a gradient Ricci soliton. Here, $f$ is said to be the potential function of the gradient Ricci soliton.\par

On a complete Riemannian manifold $N^3$, Hamilton \cite{rsh2} proposed the idea of Yamabe flow, which was inspired by Yamabe's conjecture ("metric of a complete Riemannian manifold is conformally connected to a metric with constant scalar curvature").
A Riemannian manifold $N^3$ equipped with a Riemannian metric $g$ is called a Yamabe flow if it obeys:
\begin{equation*}
\frac{\partial}{\partial t}g(t)+r g(t)=0, \,\,\,\,\, g_{0}=g(t),
\end{equation*}
where $t$ indicates the time and $r$ being the scalar curvature of $N^3$. A Riemannian manifold $N^3$ equipped with a Riemannian metric $g$ is named a Yamabe soliton if it fulfills
\begin{equation}
\label{a5}
 \mathfrak{L}_{W}g-2(\widehat{r}-\lambda)g=0
\end{equation}
for real constant $\lambda : M \rightarrow \mathbb{R}$ and $\widehat{r}$  is the scalar curvature with respect to the non-metric connection $\widehat{\nabla}$. Here, $W$ is called the potential vector field. In $N^3$, with the condition $W=Df$, the Yamabe soliton reduces to the gradient Yamabe soliton. Thus, (\ref{a5}) takes the form
\begin{equation}
\label{a6}
 \widehat{\nabla}^2 f-(\widehat{r}-\lambda)g=0.
\end{equation}
If $f$ is constant (or, $W$ is Killing) on $M$, then the soliton becomes trivial. The $3$-Kenmotsu manifolds and almost co-K\"{a}hler manifolds with Yamabe solitons have been characterized by Wang \cite{yw1} and Suh and De \cite{suh2}, respectively. Chen and Deshmukh (\cite{chen1}, \cite{chen2}) studied the Yamabe solitons on Riemannian manifolds. Some interesting outcomes on this solitons have been investigated in ( \cite{blaga2}, \cite{ozg1} \cite{kde1}, \cite{kde2}, \cite{hsu}) and also by others.\\

The notion of gradient Einstein soliton was presented by Catino and Mazzieri \cite{cm} and obeys
\begin{equation}\label{a7}
    \widehat{\mathcal{S}}-\frac{1}{2}\widehat{r}g+\widehat{\nabla}^{2}f+\lambda g=0,
\end{equation}
where $\lambda \in$ $\mathbb{R}$ is a constant and $f$ indicates a smooth function.\par

A Riemannian manifold $N^3$ endowed with the Riemannian metric $g$ is named a gradient m-quasi Einstein metric \cite{br1} if there exists a constant $\lambda$, a smooth function $f:N^{3}\rightarrow \mathbb{R}$ and obeys
\begin{equation} \widehat{\mathcal{S}}-\lambda g+\widehat{\nabla}^2 \, f -\frac{1}{m}df \otimes df =0,\label{a8}\end{equation}
where $\otimes$ indicate the tensor product and $m$ is an integer. In this case $f$ being the $m$-quasi Einstein potential function \cite{br1}. Here, the gradient m-quasi Einstein soliton is expanding for $\lambda > 0$, steady for $\lambda = 0$ and shrinking when $\lambda < 0$. If $m = \infty$, the foregoing equation represents a gradient Ricci soliton and the metric represents almost gradient Ricci soliton if it obeys the condition $m = \infty$ and $\lambda$ is a smooth function. Few characterizations of the above metrics were characterized by He et al. \cite{he}.\par

The foregoing investigations motivate us to study the Riemannian manifold $N^3$ endowed with a $SSNMC$.\par

The content of the paper is laid out as:
In Section $2$, we produce the preliminary ideas of $SSNMC$. The existence of a $SSNMC$ on a Riemannian manifold are established in Section $3$. The gradient Ricci soliton on $N^3$ equipped with a $SSNMC$ is investigated in Section $4$. Section $5$ concerns with gradient Yamabe soliton on $N^3$ with a $SSNMC$. We study the properties of $N^3$ with a $SSNMC$ whose metrics are gradient Einstein solitons and gradient $m$-quasi Einstein solitons, in Section $6$ and Section $7$, respectively.

\section{Semi-symmetric non-metric connection}
A linear connection $\widehat{\nabla}$ on $N$, defined by
\begin{equation}
\widehat{\nabla}_{U}V = \nabla_{U}V + \mathfrak{\psi}(V)U,\label{b1}
\end{equation}
$\nabla$ being the Levi-Civita connection, is a $SSNMC$.
It also obeys
\begin{equation}
 (\widehat{\nabla}_{U}g)(V,Y)=-\mathfrak{\psi}(V)g(U,Y) - \mathfrak{\psi}(Y)g(U,V).\label{b2}
\end{equation}
 Then $\widehat{R}$, the  curvature  tensor with  respect  to the $SSNMC$, $\widehat{\nabla}$,  and  $R$, the Riemannian curvature tensor are  related  by  \cite{aga}
\begin{equation}
\widehat{R}(U,V)Y = R(U,V)Y - \alpha^{*}(V,Y)U + \alpha^{*}(U,Y)V,\label{b3}
\end{equation}
for all $U,V,Y$ on $N^3$, where $\alpha^{*}$ is a $(0,2)$- tensor field defined by
\begin{equation}
\alpha^{*}(U,V)= (\nabla_{U}\mathfrak{\psi})(V) - \mathfrak{\psi} (U)\mathfrak{\psi}(V).\label{b4}
\end{equation}
Throughout this article, we choose that the vector field $\xi$ is a unit parallel vector field with respect to the Levi-Civita connection $\nabla$. Then $\nabla_{U} \xi=0$, which immediately implies
\begin{equation}\label{b5}
    R(U,V)\xi=0
\end{equation}
and
\begin{equation}\label{b6}
    \mathcal{S}(U,\xi)=0.
\end{equation}
Also, using $\nabla_{U} \xi=0$, we obtain
\begin{equation}\label{b7}
    (\nabla_{U} \mathfrak{\psi})V=0.
\end{equation}
Hence by the preceding equation, we get from $(\ref{b3})$
\begin{equation}\label{b8}
\widehat{R}(U,V)Y = R(U,V)Y + \mathfrak{\psi}(Y)[ \mathfrak{\psi}(V)U-\mathfrak{\psi}(U)V],
\end{equation}
From the foregoing equation, we can easily have
\begin{equation}\label{b9}
    \widehat{\mathcal{S}}(U,V)=\mathcal{S}(U,V)+2\mathfrak{\psi}(U)\mathfrak{\psi}(V).
\end{equation}
Contracting the above equation, we lead
\begin{equation}\label{b10}
    \widehat{r}=r-2,
\end{equation}
since $\mathfrak{\psi}(\xi)=g(\xi,\xi)=1$.
Making use of (\ref{b5}), we infer from (\ref{b8})
\begin{equation}\label{b11}
\widehat{R}(U,V)\xi =  \mathfrak{\psi}(V)U-\mathfrak{\psi}(U)V.
\end{equation}
Therefore, we obtain the subsequent relations
\begin{equation}\label{b12}
    \mathfrak{\psi}(\widehat{R}(U,V)Y)=0,
\end{equation}
\begin{equation}\label{b13}
    \widehat{\mathcal{S}}(U,\xi)=2 \mathfrak{\psi}(U),\,\,\,\,\widehat{Q}\xi=2\xi.
\end{equation}

We first establish the subsequent Lemma:
\begin{lemma}
Let $N^3$ be a Riemannian manifold with a $SSNMC$, $\widehat{\nabla}$. Then we have
\begin{equation}\label{b14}
\xi \widehat{r}=0.
\end{equation}
\end{lemma}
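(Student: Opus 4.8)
The plan is to deduce the statement from the relation $\widehat{r}=r-2$ recorded in (\ref{b10}), which shows immediately that $\xi\widehat{r}=\xi r$, since the constant $2$ differentiates to zero. Hence it suffices to prove $\xi r=0$, where $r$ is the scalar curvature of the Levi-Civita connection $\nabla$. In other words, I would argue that the scalar curvature is constant along the parallel direction $\xi$.

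The main tool I would use is the contracted second Bianchi identity, $\operatorname{div}\mathcal{S}=\tfrac12\,dr$; that is, for a local orthonormal frame $\{e_i\}$ one has $\sum_i(\nabla_{e_i}\mathcal{S})(e_i,V)=\tfrac12\,(\nabla_V r)$ for every vector field $V$. Evaluating this identity at $V=\xi$ gives $\tfrac12\,\xi r=\sum_i(\nabla_{e_i}\mathcal{S})(e_i,\xi)$, so the entire claim reduces to showing that the right-hand side vanishes.

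First I would expand the covariant derivative as $(\nabla_{e_i}\mathcal{S})(e_i,\xi)=e_i\big(\mathcal{S}(e_i,\xi)\big)-\mathcal{S}(\nabla_{e_i}e_i,\xi)-\mathcal{S}(e_i,\nabla_{e_i}\xi)$. Now I would invoke the two facts established above the Lemma: by (\ref{b6}) the Ricci tensor annihilates $\xi$, so $\mathcal{S}(e_i,\xi)=0$ and $\mathcal{S}(\nabla_{e_i}e_i,\xi)=0$, which kills the first two terms; and because $\xi$ is a parallel vector field we have $\nabla_{e_i}\xi=0$, which kills the third term. Thus every summand is zero, the contracted Bianchi identity yields $\xi r=0$, and therefore $\xi\widehat{r}=0$, as required.

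The computation is entirely routine, so there is no serious obstacle; the only point that requires care is to apply the identity $\mathcal{S}(\,\cdot\,,\xi)=0$ \emph{before} differentiating rather than after, so that all three pieces of the covariant derivative are treated correctly. It is precisely the combination of $\mathcal{S}(\,\cdot\,,\xi)=0$ with the parallelism $\nabla\xi=0$ that forces every term to vanish, and neither fact on its own would be enough.
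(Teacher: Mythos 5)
Your proof is correct, but it takes a genuinely different route from the paper's. The paper proves the Lemma by inserting the three-dimensional curvature decomposition $R(U,V)Y=g(V,Y)QU-\dots$ into the relations (\ref{b8})--(\ref{b9}), setting $V=Y=\xi$ to obtain the explicit formula $\widehat{Q}U=(\frac{\widehat{r}}{2}+1)U-(\frac{\widehat{r}}{2}-1)\mathfrak{\psi}(U)\xi$, differentiating this covariantly, and then contracting (which implicitly invokes the contracted second Bianchi identity $\operatorname{div}Q=\tfrac12 Dr$ to identify the left-hand side). You instead reduce directly to $\xi r=0$ via $\widehat{r}=r-2$ and apply the contracted Bianchi identity to $\operatorname{div}\mathcal{S}(\xi)$, which vanishes term by term because $\mathcal{S}(\cdot,\xi)\equiv 0$ and $\nabla\xi=0$; every step checks out. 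Your argument is shorter, more transparent, and dimension-free (it nowhere uses $n=3$, whereas the paper's decomposition (\ref{b15}) is special to dimension three); the paper's longer detour is not wasted, however, since the intermediate formula (\ref{b17}) for $\widehat{Q}$ is reused repeatedly in the later sections (e.g., in deriving (\ref{c5})), so the Lemma falls out there as a byproduct of machinery the authors need anyway.
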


\begin{proof}
In $N^3$, the Riemannian curvature tensor is expressed by
\begin{eqnarray}\label{b15}
R(U,V)Y &=&\; g(V,Y)Q U-g(U,Y)Q V\; +\mathcal{S}(V,Y)U-\mathcal{S}(U,Y)V \nonumber\\
 && \;-\frac{r}{2}[g(V,Y)U -g(U,Y)V],
\end{eqnarray}
Making use of (\ref{b8}) and (\ref{b9}), we acquire
\begin{eqnarray}\label{b16}
\widehat{R}(U,V)Y&-&\mathfrak{\psi}(Y)[\mathfrak{\psi}(V)U-\mathfrak{\psi}(U)V]= g(V,Y)[\widehat{Q} U-2\xi \mathfrak{\psi}(U)]\nonumber\\&&
-g(U,Y)[\widehat{Q} V-2\xi \mathfrak{\psi}(V)]+[\widehat{\mathcal{S}}(V,Y)-2\mathfrak{\psi}(V)\mathfrak{\psi}(Y)]U\nonumber\\&&
-[\widehat{\mathcal{S}}(U,Y)-2\mathfrak{\psi}(U)\mathfrak{\psi}(Y)]V\nonumber\\&&
-\frac{r}{2}[g(V,Y)U -g(U,Y)V],
\end{eqnarray}
Putting $V=Y=\xi$, the foregoing equation yields
\begin{equation}\label{b17}
\widehat{Q}U=(\frac{\widehat{r}}{2}+1)U-(\frac{\widehat{r}}{2}-1)\mathfrak{\psi}(U)\xi.
\end{equation}
Taking covariant derivative along $V$, we write
\begin{equation}\label{b18}
(\nabla_{V}\widehat{Q})U=\frac{(V \widehat{r})}{2}[U-\mathfrak{\psi}(U)\xi] .
\end{equation}
Contracting the foregoing equation we acquire the desired result.
\end{proof}

The projective curvature tensor $\widehat{P}$ of $N^3$ with respect to $\widehat{\nabla}$ is defined by
\begin{equation}\label{b19}
\widehat{P}(U,V)Y = \widehat{R}(U,V)Y -
\frac{1}{2}[\widehat{\mathcal{S}}(V,Y)U - \widehat{\mathcal{S}}(U,Y)V].
\end{equation}

Making use of ($\ref{b8}$) and ($\ref{b9}$), ($\ref{b19}$) reduces to
\begin{equation}\label{b20}\widehat{P}(U,V)Y = P(U,V)Y , \end{equation} where $P$ represents the projective curvature tensor with respect to the Levi-Civita connection
$\nabla$ defined by
\begin{equation*}
    P(U,V)Y = R(U,V)Y -\frac{1}{2}[\mathcal{S}(V,Y)U - \mathcal{S}(U,Y)V].
\end{equation*}

\begin{theorem}
\label{tb1}
If $N^{3}$ is endowed with a $SSNMC$ $\widehat{\nabla}$, then the projective curvature tensor with respect to $\widehat{\nabla}$ and $\nabla$, respectively, coincide on $N^3$.
\end{theorem}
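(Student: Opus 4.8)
The plan is to prove the stronger identity $\widehat{P}(U,V)Y = P(U,V)Y$ stated as equation (\ref{b20}), from which the coincidence of the two projective curvature tensors follows immediately. The natural route is a direct substitution: insert the already-established transformation laws (\ref{b8}) for $\widehat{R}$ and (\ref{b9}) for $\widehat{\mathcal{S}}$ into the defining relation (\ref{b19}) of $\widehat{P}$, and then verify that every term involving $\mathfrak{\psi}$ cancels, leaving exactly the classical expression for $P$.

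First I would recall the definition
\begin{equation*}
\widehat{P}(U,V)Y = \widehat{R}(U,V)Y - \tfrac{1}{2}\bigl[\widehat{\mathcal{S}}(V,Y)U - \widehat{\mathcal{S}}(U,Y)V\bigr].
\end{equation*}
Substituting (\ref{b8}) replaces $\widehat{R}(U,V)Y$ by $R(U,V)Y + \mathfrak{\psi}(Y)[\mathfrak{\psi}(V)U - \mathfrak{\psi}(U)V]$, contributing the correction term $\mathfrak{\psi}(Y)\mathfrak{\psi}(V)U - \mathfrak{\psi}(Y)\mathfrak{\psi}(U)V$. Substituting (\ref{b9}) into the Ricci bracket replaces $\widehat{\mathcal{S}}(V,Y)$ by $\mathcal{S}(V,Y) + 2\mathfrak{\psi}(V)\mathfrak{\psi}(Y)$ and $\widehat{\mathcal{S}}(U,Y)$ by $\mathcal{S}(U,Y) + 2\mathfrak{\psi}(U)\mathfrak{\psi}(Y)$; the factor $-\tfrac{1}{2}$ then produces exactly $-\mathfrak{\psi}(V)\mathfrak{\psi}(Y)U + \mathfrak{\psi}(U)\mathfrak{\psi}(Y)V$.

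The key observation is that these two batches of correction terms are negatives of one another: the contribution $+\mathfrak{\psi}(Y)\mathfrak{\psi}(V)U$ from the curvature term is annihilated by the contribution $-\mathfrak{\psi}(V)\mathfrak{\psi}(Y)U$ from the Ricci term, and likewise $-\mathfrak{\psi}(Y)\mathfrak{\psi}(U)V$ cancels $+\mathfrak{\psi}(U)\mathfrak{\psi}(Y)V$. What survives is precisely
\begin{equation*}
R(U,V)Y - \tfrac{1}{2}\bigl[\mathcal{S}(V,Y)U - \mathcal{S}(U,Y)V\bigr] = P(U,V)Y,
\end{equation*}
which establishes (\ref{b20}) and hence the theorem. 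I do not anticipate a genuine obstacle here; the entire argument reduces to this single algebraic cancellation, and the only point requiring any care is to track the sign and the common scalar factor $\mathfrak{\psi}(V)\mathfrak{\psi}(Y)$ (respectively $\mathfrak{\psi}(U)\mathfrak{\psi}(Y)$) so as to confirm that the $\mathfrak{\psi}$-quadratic terms match up and annihilate. Conceptually, the result reflects the general fact that the correction terms distinguishing a semi-symmetric connection from its Levi-Civita counterpart are of projectively trivial type, so they are absorbed into the Ricci-adjustment and leave the projective structure invariant.
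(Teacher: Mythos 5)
Your proposal is correct and follows essentially the same route as the paper: substitute the transformation laws (\ref{b8}) and (\ref{b9}) into the definition (\ref{b19}) and observe that the $\mathfrak{\psi}$-quadratic corrections from the curvature term and from the Ricci bracket cancel exactly, yielding (\ref{b20}). The paper states this computation in one line without displaying the cancellation, so your write-up is simply a more explicit version of the same argument.
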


In differential geometry, the investigation of conformal curvature tensor performs a significant role. Also, it has various applications in applied physics and the other branches of modern sciences. Motivated by the above facts we investigate the properties of the conformal curvature tensor $C$.
With respect to $\widehat{\nabla}$, the conformal curvature tensor $\widehat{C}$ is defined
by
\begin{eqnarray}\label{b21}
 \widehat{C}(U,V)Y&=&\widehat{R}(U,V)Y -[\widehat{\mathcal{S}}(V,Y)U - \widehat{\mathcal{S}}(U,Y)V+g(V,Y)\widehat{Q}U\nonumber\\&&-g(U,Y)\widehat{Q}V]
 +\frac{\widehat{r}}{2}[g(V,Y)U-g(U,Y)V],
\end{eqnarray}
for all $U$, $V$ and $Y$ on $N^3$ \cite{wei}.
Utilizing ($\ref{b8}$) and ($\ref{b9}$) in ($\ref{b21}$), we obtain
\begin{eqnarray}\label{b22}
 \widehat{C}(U,V)Y&=&C(U,V)Y -\widehat{\psi}(V)\widehat{\psi}(Y)U+\widehat{\psi}(U)\widehat{\psi}(Y)V\nonumber\\&&
+2\xi g(V,Y)\widehat{\psi}(U)-2\xi g(U,Y)\widehat{\psi}(V)\nonumber\\&&
+g(V,Y)U-g(U,Y)V,
\end{eqnarray}
where $C$ represents the conformal curvature tensor with respect to the Levi-Civita connection $\nabla$ defined by \begin{eqnarray}\nonumber
 C(U,V)Y&=&R(U,V)Y -[\mathcal{S}(V,Y)U - \mathcal{S}(U,Y)V+g(V,Y)QU\nonumber\\&&-g(U,Y)QV]
 +\frac{r}{2}[g(V,Y)U-g(U,Y)V].
\end{eqnarray}

Putting $Y=\xi$ in (\ref{b22}), we get

\begin{equation}\label{b23}
\widehat{C}(U,V)\xi = C(U,V)\xi.\end{equation}
Hence, we have the subsequent theorem:
\begin{theorem}
\label{tb2}
If $N^{3}$ is equipped with a $SSNMC$ $\widehat{\nabla}$, then the the conformal curvature tensor with respect to $\widehat{\nabla}$ and $\nabla$, satisfy the relation (\ref{b23}).
\end{theorem}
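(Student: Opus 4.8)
The statement is the identity (\ref{b23}), and my plan is to obtain it as the specialization $Y=\xi$ of the master relation (\ref{b22}) already established between $\widehat{C}$ and $C$. Since the substantive work---assembling (\ref{b22}) out of (\ref{b8}) and (\ref{b9})---is done upstream, the proof reduces to a short algebraic collapse, and the only thing that needs monitoring is the bookkeeping of signs.

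First I would recall the two facts about $\xi$ that drive every cancellation: the defining relation $\mathfrak{\psi}(U)=g(U,\xi)$, which converts inner products against $\xi$ into values of the $1$-form, and the normalization $\mathfrak{\psi}(\xi)=g(\xi,\xi)=1$ arising from $\xi$ being a unit field. Setting $Y=\xi$ in (\ref{b22}), the first pair of correction terms $-\mathfrak{\psi}(V)\mathfrak{\psi}(\xi)U+\mathfrak{\psi}(U)\mathfrak{\psi}(\xi)V$ simplifies to $-\mathfrak{\psi}(V)U+\mathfrak{\psi}(U)V$; the middle pair becomes $2\xi\,\mathfrak{\psi}(V)\mathfrak{\psi}(U)-2\xi\,\mathfrak{\psi}(U)\mathfrak{\psi}(V)$ after replacing $g(V,\xi)$ and $g(U,\xi)$ by $\mathfrak{\psi}(V)$ and $\mathfrak{\psi}(U)$; and the final pair $g(V,\xi)U-g(U,\xi)V$ becomes $\mathfrak{\psi}(V)U-\mathfrak{\psi}(U)V$.

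Then I would observe that these six terms annihilate in three pairs: the $\mathfrak{\psi}(V)U$ terms cancel against one another, the $\mathfrak{\psi}(U)V$ terms cancel, and the two scalar multiples of $\xi$ cancel because they are identical after commuting the scalar factors. What survives is exactly $\widehat{C}(U,V)\xi=C(U,V)\xi$, which is (\ref{b23}), completing the argument.

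I do not anticipate a genuine obstacle here: once (\ref{b22}) is in hand the proof is a one-line substitution, and the only place an error could plausibly enter is a dropped sign among the correction terms. Accordingly, the single step I would double-check is that the two $\xi$-valued contributions really do carry opposite signs before declaring their cancellation, since the entire collapse hinges on that antisymmetry.
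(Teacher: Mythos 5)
Your proposal is correct and follows exactly the paper's route: the paper also obtains (\ref{b23}) by putting $Y=\xi$ in (\ref{b22}) and letting the correction terms cancel via $\mathfrak{\psi}(\xi)=1$ and $g(\cdot,\xi)=\mathfrak{\psi}(\cdot)$. Your sign bookkeeping checks out against the stated form of (\ref{b22}), so nothing further is needed.
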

\section{Existence of a semi-symmetric type non-metric connection}
Here we construct a non-trivial example of semi-symmetric type non-metric connection on a Riemannian manifold.

\subsection{Example 1:}

Let us consider a three-dimensional differentiable manifold $N^3=\{(u,v,w)\in \mathbb{R}^{3}, w\neq0\},$ where
$(u,v,w)$ indicates the standard coordinate of $\mathbb{R}^{3}.$
Let us choose
\begin{equation*}
    k_{1}=e^w\frac{\partial }{\partial
u},\hspace{7pt}k_{2}=e^w\frac{\partial } {\partial v}
,\hspace{7pt}k_{3}=\frac{\partial }{\partial w}.
\end{equation*}
At each point of $N^3$ the preceding vector fields are linearly independent.
Here we define the Riemannian metric $g$ as
\begin{equation*}
    g(k_{1},k_{3})
    =g(k_{1},k_{2})=g(k_{2},k_{3})=0,
\end{equation*}
\begin{equation*}
    g(k_{1},k_{1})
    =g(k_{2},k_{2})=g(k_{3},k_{3})=1.
\end{equation*}
$\mathfrak{\psi}$ indicates a $1$-form defined by $\mathfrak{\psi}(U)=g(U,\xi)$, where $\xi=k_{3}$.
Hence, $(N^3 ,g)$ is a three-dimensional Riemannian manifold.
 The Lie brackets are calculated as
\begin{eqnarray*}
[k_{1},k_{3}]
&=&k_{1}k_{3}-k_{3}k_{1}\nonumber\\
&=&e^w \frac{\partial}{\partial u}
(\frac{\partial }{\partial w})-(\frac{\partial }{\partial w})(e^w \frac{\partial }{\partial u})\nonumber\\
&=&e^w \frac{\partial ^{2}}{\partial u\partial
w}-e^w \frac{\partial ^{2}}{\partial w\partial u}-e^w \frac{\partial }{\partial u}\nonumber\\&=&-k_{1}.
\end{eqnarray*}

Similarly,
\begin{equation*}
    [k_{1},k_{2}]=0\hspace{10pt}and\hspace{10pt}
[k_{2},k_{3}]=-k_{2}.
\end{equation*}

$\nabla $, the Levi-Civita connection with respect to $g$ is obtained by
\begin{eqnarray}
2g(\nabla
_{U}V,Y)&=&U g(V,Y)+V g(Y,U)-Y g(U,V)\nonumber\\&-&g(U,[V,Y])-g(V,[U,Y])
+g(Y,[U,V]),\label{f1}
\end{eqnarray}
 which is termed as Koszul's formula.

Making use of (\ref{f1}) we have
\begin{eqnarray}
2g(\nabla_{k_{1}}k_{3},k_{1})
&=&-2g(k_{1},k_{1}).\label{f2}
\end{eqnarray}

Again by (\ref{f1})
\begin{equation}\label{f3}
    2g(\nabla_{k_{1}}k_{3},k_{2})=0=-2g(k_{1},k_{2})
\end{equation}
and
\begin{equation}\label{f4}
    2g(\nabla _{k_{1}}k_{3},k_{3})=0=-2g(k_{1},k_{3}).
\end{equation}

From (\ref{f2}), (\ref{f3}) and (\ref{f4}) we get
$$2g(\nabla _{k_{1}}k_{3},U)=-2g(k_{1},U),$$ for all $U\in \mathfrak{X}(N).$

Thus, $$\nabla _{k_{1}}k_{3}=-k_{1}.$$

Therefore, (\ref{f1}) further gives
$$\nabla _{k_{1}}k_{2}=0,\hspace{10pt}
\nabla _{k_{1}}k_{1}=k_{3},$$
$$\nabla _{k_{2}}k_{3}=-k_{2},\hspace{10pt}\nabla _{k_{2}}k_{2}=k_{3},\hspace{10pt}
\nabla _{k_{2}}k_{1}=0,$$
\begin{equation}\label{f5}
    \nabla
_{k_{3}}k_{3}=0,\hspace{10pt}\nabla _{k_{3}}k_{2}=0,\hspace{10pt}
\nabla _{k_{3}}k_{1}=0.
\end{equation}
We know that
\begin{equation}\label{f6}
    R(U,V)Y=\nabla _{U}\nabla _{V}Y-\nabla _{V}\nabla
_{U}Y-\nabla _{[U,V]}Y,
\end{equation}
 where $R$ is the Riemann curvature tensor.
Utilizing the foregoing results and with the help of (\ref{f6}), we acquire
$$R(k_{1},k_{2})k_{3}=0,\hspace{10pt}
R(k_{2},k_{3})k_{3}=-k_{2},\hspace{10pt}
R(k_{1},k_{3})k_{3}=-k_{1},$$
$$R(k_{1},k_{2})k_{2}=-k_{1},\hspace{10pt}
R(k_{2},k_{3})k_{2}=k_{3},\hspace{10pt}
R(k_{1},k_{3})k_{2}=0,$$
$$R(k_{1},k_{2})k_{1}=k_{2},\hspace{10pt}
R(k_{2},k_{3})k_{1}=0,\hspace{10pt}
R(k_{1},k_{3})k_{1}=k_{3}.$$\\

Using the above expressions, the Ricci tensor can be obtained as
\begin{eqnarray*}
\mathcal{S}(k_{1},k_{1})&=&g(R(k_{1},k_{2})k_{2},k_{1})
+g(R(k_{1},k_{3})k_{3},k_{1})
\nonumber\\&=&-2.
\end{eqnarray*}

Similarly, we get
\begin{equation*}
    \mathcal{S}(k_{2},k_{2})=\mathcal{S}(k_{3},k_{3})=-2.
\end{equation*}

Therefore, the scalar curvature $r$ is calculated as
\begin{equation*}
    r=\mathcal{S}(k_{1},k_{1})+\mathcal{S}(k_{2},k_{2})
    +\mathcal{S}(k_{3},k_{3})=-6.
\end{equation*}
Making use of the above expressions and using the equation (\ref{b1}), we have

$$\widehat{\nabla} _{k_{1}}k_{3}=0\hspace{10pt}
\widehat{\nabla} _{k_{1}}k_{2}=0,\hspace{10pt}
\widehat{\nabla} _{k_{1}}k_{1}=k_{3},$$
$$\widehat{\nabla} _{k_{2}}k_{3}=0,\hspace{10pt}
\widehat{\nabla} _{k_{2}}k_{2}=k_{3},\hspace{10pt}
\widehat{\nabla} _{k_{2}}k_{1}=0,$$
\begin{equation}\label{f7}
    \widehat{\nabla}_{k_{3}}k_{3}=k_3,\hspace{10pt}
    \widehat{\nabla} _{k_{3}}k_{2}=0,\hspace{10pt}
\widehat{\nabla} _{k_{3}}k_{1}=0.
\end{equation}
From the last equation and using (\ref{a1}), we obtain $\widehat{T}(k_{1},k_{3})=k_{1}$ and $\mathfrak{\psi} (k_{3}) k_{1}-\mathfrak{\psi} (k_{1}) k_{3}=k_{1}$. Similarly, other components can be verified. Therefore, the linear connection $\widehat{\nabla}$ defined on ($N^3, g$) as $(\ref{b1})$, is a semi-symmetric connection. Also, we have
\begin{equation*}
    (\widehat{\nabla}_{k_{1}}g)(k_{1},k_{3})=-1 \neq 0.
\end{equation*}
Thus, the linear connection $\widehat{\nabla}$ is non-metric on ($N^3, g$).

\section{ gradient Ricci solitons on $N^3$ with a $SSNMC$}

This section carries out the study of gradient Ricci solitons in $N^3$ with a $SSNMC$.\par

Let us choose that the soliton vector $W$ of the Ricci soliton $(g, W, \lambda)$ in $N^3$ with a $SSNMC$ is a gradient of some smooth function $f$. Then using (\ref{a4}), we infer
\begin{equation}
\label{c1}
\widehat{\nabla}_{U}Df=-\widehat{Q}U-\lambda U,
\end{equation}
for all $U \in \mathfrak{X}(N)$.
Making use of the above equation, the subsequent relation
\begin{equation}
\label{c2}
\widehat{R}(U, V)Df=\widehat{\nabla}_{U} \widehat{\nabla}_{V}Df-\widehat{\nabla}_{V} \widehat{\nabla}_{U}Df-\widehat{\nabla}_{[U, V]}Df
\end{equation}
yields
\begin{equation}
\label{c3}
\widehat{R}(U, V)Df=(\widehat{\nabla}_{U}\widehat{Q})(V)-(\widehat{\nabla}_{V}\widehat{Q})(U).
\end{equation}
The contraction of the preceding equation gives
\begin{equation}
\label{c4}
\widehat{\mathcal{S}}(U, Df)=-\frac{1}{2}(U\widehat{r}).
\end{equation}
Again, from (\ref{b17}) we obtain
\begin{equation}
\label{c5}
\widehat{\mathcal{S}}(U, Df)=(\frac{\widehat{r}}{2}+1)(U f)-(\frac{\widehat{r}}{2}-1)\mathfrak{\psi}(U)(\xi f).
\end{equation}
Comparing the equations (\ref{c4}) and (\ref{c5})
\begin{equation}\label{c6}
 -\frac{1}{2}(U\widehat{r})=(\frac{\widehat{r}}{2}+1)(U f)-(\frac{\widehat{r}}{2}-1)\mathfrak{\psi}(U)(\xi f).
\end{equation}
Now, putting $U=\xi$ in (\ref{c6}), we find
\begin{equation}
\label{c7}
\xi f=0,
\end{equation}
since $\xi \widehat{r}=0$.

Equation (\ref{c3}) gives
\begin{equation}\label{c9}
    g(\widehat{R}(U,V)\xi, Df)=0.
\end{equation}
Again, from equation (\ref{b11}) we infer that
\begin{equation}\label{c10}
    g(\widehat{R}(U,V)\xi, Df)=\mathfrak{\psi}(V)(U f)-\mathfrak{\psi}(U)(V f).
\end{equation}
Comparing last two equations and putting $V=\xi$ and using $\xi f=0$, we lead
\begin{equation}\label{c11}
    U f=0,
\end{equation}
which shows that $f=$ constant. Making use of the fact that $f$ is constant, equation (\ref{c1}) infers that the manifold is an Einstein manifold. Hence, the Riemannian manifold $N^3$ is of constant sectional curvature.

\begin{theorem}
\label{tc1}
Let the soliton vector field $W$ of the Ricci soliton $(g, W, \lambda)$ in $N^3$ with a $SSNMC$ be a gradient Ricci soliton. Then  $N^{3}$ is a manifold of constant sectional curvature with respect to the $SSNMC$.
\end{theorem}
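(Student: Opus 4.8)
The plan is to squeeze enough differential information out of the gradient soliton equation (\ref{a4}) to force the potential function $f$ to be constant, after which the soliton equation itself degenerates into an Einstein condition that, in dimension three, produces constant sectional curvature. First I would recast (\ref{a4}) as the operator identity $\widehat{\nabla}_U Df = -\widehat{Q}U - \lambda U$ and substitute it into the commutator defining $\widehat{R}(U,V)Df$, so that $\widehat{R}(U,V)Df$ is expressed entirely through the covariant derivatives $(\widehat{\nabla}_U\widehat{Q})V - (\widehat{\nabla}_V\widehat{Q})U$. A contraction over $U$ then delivers a scalar relation tying $\widehat{\mathcal{S}}(U,Df)$ to $U\widehat{r}$.

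The next step is to evaluate $\widehat{\mathcal{S}}(U,Df)$ a second way, using the explicit three-dimensional Ricci operator (\ref{b17}); equating the two expressions yields one scalar equation involving $f$, $\widehat{r}$, and $\mathfrak{\psi}$. Setting $U=\xi$ and invoking the Lemma $\xi\widehat{r}=0$ of (\ref{b14}) annihilates the curvature-derivative term and leaves $\xi f=0$. To promote this to $Uf=0$ in every direction I would use the parallel-$\xi$ identity (\ref{b11}), which renders $\widehat{R}(U,V)\xi=\mathfrak{\psi}(V)U-\mathfrak{\psi}(U)V$ purely algebraic; pairing it with $Df$ gives $\mathfrak{\psi}(V)(Uf)-\mathfrak{\psi}(U)(Vf)$, while the gradient condition (through (\ref{c3})) forces this same pairing to vanish, so $\mathfrak{\psi}(V)(Uf)-\mathfrak{\psi}(U)(Vf)=0$. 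Choosing $V=\xi$ and using $\xi f=0$ now forces $Uf=0$, whence $f$ is constant.

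Once $f$ is constant, $Df=0$ and the operator identity collapses to $\widehat{Q}U=-\lambda U$, so the metric is Einstein with respect to the $SSNMC$ (and, via (\ref{b13}), $\lambda$ and $\widehat{r}$ are pinned down). Because any three-dimensional Einstein space has constant sectional curvature, and the curvature relation (\ref{b8}) transports this conclusion to $\widehat{\nabla}$, the theorem follows. I expect the genuine obstacle to be the passage from $\xi f=0$ to the full vanishing $Uf=0$: a single contraction only controls the $\xi$-direction, and recovering the directions transverse to $\xi$ relies essentially on the special geometry forced by $\nabla\xi=0$—namely that $\xi$ sits in the kernel of $\widehat{R}$ in the sharp form (\ref{b11})—without which $f$ could in principle still vary across the distribution orthogonal to $\xi$.
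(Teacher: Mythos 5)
Your proposal follows essentially the same route as the paper's own argument: the same commutator identity giving $\widehat{R}(U,V)Df$ in terms of $(\widehat{\nabla}_U\widehat{Q})V-(\widehat{\nabla}_V\widehat{Q})U$, the same contraction compared against (\ref{b17}) to get $\xi f=0$ via $\xi\widehat{r}=0$, and the same pairing of (\ref{b11}) with $Df$ to upgrade this to $Uf=0$, hence $f$ constant and the Einstein conclusion. No substantive difference from the paper's proof.
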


\section{Gradient Yamabe solitons on $N^3$ with a $SSNMC$}

From equation (\ref{a6}), we find
\begin{equation}
\label{4.1}
\widehat{\nabla}_{V}Df=(\widehat{r}-\lambda)V.
\end{equation}
Differentiating (\ref{4.1}) covariantly along the vector field $U$, we obtain
\begin{equation}
\label{4.2}
\widehat{\nabla}_{U}\widehat{\nabla}_{V}Df=(U \widehat{r})V+(\widehat{r}-\lambda)\widehat{\nabla}_{U}V.
\end{equation}
Interchanging $U$ and $V$ in the above equation and then utilizing the preceding equation in $\widehat{R}(U, V)Df=\widehat{\nabla}_{U} \widehat{\nabla}_{V}Df-\widehat{\nabla}_{V} \widehat{\nabla}_{U}Df-\widehat{\nabla}_{[U, V]}Df$, we lead
\begin{equation}
\label{4.3}
\widehat{R}(U, V)Df=(U \widehat{r})V-(V \widehat{r})U.
\end{equation}
Contracting the previous equation over $U$, we get
\begin{equation}
\label{4.4}
\widehat{\mathcal{S}}(V, Df)=-2(V \widehat{r}).
\end{equation}
Combining the last equation and (\ref{c5}), we infer
\begin{equation}
\label{4.5}
-2(U\widehat{r})=(\frac{\widehat{r}}{2}+1)(U f)-(\frac{\widehat{r}}{2}-1)\mathfrak{\psi}(U)(\xi f).
\end{equation}
Putting $U=\xi$ in the foregoing equation, we have
\begin{equation}
\label{4.6}
\xi f=0,
\end{equation}
since $\xi \widehat{r}=0$.
Thus, from (\ref{4.5}), we obtain
\begin{equation}\label{4.7}
 -2(U\widehat{r})=(\frac{\widehat{r}}{2}+1)(U f).
\end{equation}
Now, from equation (\ref{4.3}) we find that
\begin{equation}\label{4.8}
    g(\widehat{R}(U,V)\xi, Df)=\mathfrak{\psi}(U)(V \widehat{r})-\mathfrak{\psi}(V)(U \widehat{r}).
\end{equation}
Combining equation (\ref{c10}) and (\ref{4.8}), we have
\begin{equation}\label{4.9}
\mathfrak{\psi}(V)(U f)-\mathfrak{\psi}(U)(V f)=\mathfrak{\psi}(U)(V \widehat{r})-\mathfrak{\psi}(V)(U \widehat{r}).
\end{equation}
Setting $V=\xi$ in the previous equation gives
\begin{equation}\label{4.10}
(U \widehat{r})=-(U f).
\end{equation}
Utilizing (\ref{4.10}) in (\ref{4.7}) we infer that
\begin{equation}\label{4.11}
(\frac{\widehat{r}}{2}-1)(U f)=0,
\end{equation}
which entails that either $\widehat{r} =2$ or $\widehat{r} \ne 2$.\\

If $\widehat{r} =2$, then from (\ref{b10}) we infer that $r=4$. Therefore, $N^3$ is of constant scalar curvature.\par
Next, we suppose that $\widehat{r} \ne 2$, that is,  $(U f)=0$, which implies $f$ is a constant. Therefore, the gradient Yamabe soliton is trivial.\\
Hence, we state the result as:
\begin{theorem}
\label{thm4.1}
Let the Riemannian metric of $N^3$ with a $SSNMC$ be the gradient Yamabe soliton. Then, either $N^{3}$ is a manifold of constant scalar curvature or the gradient Yamabe soliton is trivial with respect to the $SSNMC$.
\end{theorem}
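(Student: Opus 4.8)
The plan is to begin with the defining relation (\ref{a6}) of the gradient Yamabe soliton and read off an expression for the Hessian of $f$ with respect to the connection, namely $\widehat{\nabla}_{V}Df=(\widehat{r}-\lambda)V$. First I would differentiate this covariantly along a second vector field $U$, then feed the result into the curvature definition $\widehat{R}(U,V)Df=\widehat{\nabla}_{U}\widehat{\nabla}_{V}Df-\widehat{\nabla}_{V}\widehat{\nabla}_{U}Df-\widehat{\nabla}_{[U,V]}Df$. The antisymmetrization cancels the $(\widehat{r}-\lambda)\widehat{\nabla}_{U}V$ contributions and leaves the clean central identity $\widehat{R}(U,V)Df=(U\widehat{r})V-(V\widehat{r})U$, from which the rest of the argument is driven.

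Next I would contract this curvature identity over $U$ to produce $\widehat{\mathcal{S}}(V,Df)=-2(V\widehat{r})$, and confront it with the independent evaluation of $\widehat{\mathcal{S}}(\cdot,Df)$ obtained by pairing the explicit three-dimensional formula (\ref{b17}) for $\widehat{Q}$ with $Df$. Equating the two yields a scalar relation tying together $U\widehat{r}$, $Uf$, and a term carrying the factor $\mathfrak{\psi}(U)(\xi f)$. To dispose of the awkward $\xi f$ factor I would specialize to $U=\xi$; the essential input here is the fact $\xi\widehat{r}=0$ recorded in (\ref{b14}), which collapses the specialized equation to $\xi f=0$ and thereby reduces the general relation to a proportionality between $U\widehat{r}$ and $Uf$.

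The decisive move, which I expect to be the main obstacle, is manufacturing a \emph{second} and independent relation between $U\widehat{r}$ and $Uf$ so that the scalar curvature gradient can be eliminated. For this I would exploit the intrinsic $SSNMC$ identity (\ref{b11}), $\widehat{R}(U,V)\xi=\mathfrak{\psi}(V)U-\mathfrak{\psi}(U)V$, computing $g(\widehat{R}(U,V)\xi,Df)$ in two ways: once directly from (\ref{b11}) and once from the soliton-derived identity $\widehat{R}(U,V)Df=(U\widehat{r})V-(V\widehat{r})U$. Setting $V=\xi$ in the matched expressions and again invoking $\xi f=0$ delivers the clean relation $U\widehat{r}=-Uf$. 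The subtlety is that both inputs are genuinely curvature relations for $Df$, one forced by the soliton equation and one built into the connection, and it is precisely their compatibility that yields rigidity.

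Finally I would substitute $U\widehat{r}=-Uf$ into the proportionality from the contraction step; the terms reorganize into $\left(\frac{\widehat{r}}{2}-1\right)(Uf)=0$ for every $U$. The dichotomy is then immediate: if $\widehat{r}=2$, then (\ref{b10}) forces $r=4$, so $N^3$ has constant scalar curvature; otherwise $Uf=0$ for all $U$, so $f$ is constant and the gradient Yamabe soliton is trivial. The technical heart of the proof is thus the simultaneous deployment of the two distinct curvature identities for $Df$, whose interplay decouples $f$ from $\widehat{r}$ and produces the stated alternative.
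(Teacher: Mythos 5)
Your proposal matches the paper's proof essentially step for step: the same Hessian relation $\widehat{\nabla}_{V}Df=(\widehat{r}-\lambda)V$, the same curvature identity $\widehat{R}(U,V)Df=(U\widehat{r})V-(V\widehat{r})U$, the same contraction confronted with (\ref{b17}), the same use of $\xi\widehat{r}=0$ to get $\xi f=0$, the same double evaluation of $g(\widehat{R}(U,V)\xi,Df)$ via (\ref{b11}) to obtain $U\widehat{r}=-Uf$, and the same concluding dichotomy from $\left(\frac{\widehat{r}}{2}-1\right)(Uf)=0$. It is correct relative to the paper's own argument and takes the identical route.
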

Also, if $\widehat{r} =2$, then using the equation (\ref{b17}) we acquires that the manifold is an Einstein manifold. Hence, the Riemannian manifold $N^3$ is of constant sectional curvature.

\begin{corollary}
\label{tccc1}
Let the Riemannian metric of $N^3$ with a $SSNMC$ be the gradient Yamabe soliton. Then, either $N^{3}$ is a manifold of constant sectional curvature or the gradient Yamabe soliton is trivial with respect to the $SSNMC$.
\end{corollary}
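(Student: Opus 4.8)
The plan is to obtain this corollary as a refinement of Theorem \ref{thm4.1} rather than by a fresh computation. The proof of that theorem already reduces the problem to the identity $(\frac{\widehat{r}}{2}-1)(Uf)=0$ of (\ref{4.11}), so I would start from there and analyze the two factors more closely, replacing the weaker ``constant scalar curvature'' conclusion by ``constant sectional curvature.''

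First I would dispose of the branch $\widehat{r}\neq 2$. Here (\ref{4.11}) forces $(Uf)=0$ for every vector field $U$, hence $Df=0$ and $f$ is constant; by the definition accompanying (\ref{a6}) the gradient Yamabe soliton is then trivial, which is exactly the second alternative of the corollary.

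The substantive branch is $\widehat{r}=2$. Substituting this value into the Ricci-operator formula (\ref{b17}) annihilates the coefficient $(\frac{\widehat{r}}{2}-1)$ of the $\mathfrak{\psi}(U)\xi$ term and leaves $\widehat{Q}U=2U$, equivalently $\widehat{\mathcal{S}}(U,V)=2g(U,V)$, so $N^3$ is Einstein with respect to the $SSNMC$. I would then insert this Einstein condition, together with $r=4$ coming from (\ref{b10}), into the three-dimensional curvature identity (\ref{b16}) in order to read off the form of $\widehat{R}(U,V)Y$ and conclude that the sectional curvature relative to $\widehat{\nabla}$ is constant, thereby yielding the first alternative.

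The step I expect to be delicate, and hence the main obstacle, is precisely this passage from the Einstein condition to constant sectional curvature. Because $\widehat{\nabla}$ is non-metric, $\widehat{R}$ need not possess all the algebraic symmetries of a Riemannian curvature tensor, and a direct substitution into (\ref{b16}) under $\widehat{Q}U=2U$ leaves residual terms built from $\mathfrak{\psi}$ and $\xi$ alongside the expected multiple of $g(V,Y)U-g(U,Y)V$; one must check that these residual terms are absorbed by the $SSNMC$ structure, so that ``constant sectional curvature with respect to $\widehat{\nabla}$'' is read in the same sense already employed for Theorem \ref{tc1}. Granting that, combining the two branches produces the stated dichotomy and completes the proof.
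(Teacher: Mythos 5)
Your proposal follows the paper's argument exactly: in the branch $\widehat{r}\neq 2$ equation (\ref{4.11}) forces $f$ to be constant and the soliton trivial, while in the branch $\widehat{r}=2$ the paper likewise substitutes into (\ref{b17}) to obtain $\widehat{Q}U=2U$, declares $N^3$ Einstein with respect to the $SSNMC$, and concludes constant sectional curvature. The ``delicate step'' you rightly flag --- passing from the Einstein condition for a non-metric connection, whose curvature tensor lacks the usual symmetries, to constant sectional curvature --- is asserted in the paper without any further justification, so your treatment is no less complete than the original.
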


\section{Gradient Einstein solitons on $N^3$ with a $SSNMC$}

Making use of (\ref{a7}), we have
\begin{equation}
\label{5.1}
\widehat{\nabla}_{V}Df=-\widehat{Q}V+\frac{\widehat{r}}{2}V-\lambda V.
\end{equation}
Differentiating (\ref{5.1}) covariantly along $U$, we find
\begin{equation}
\label{5.2}
\widehat{\nabla}_{U}\widehat{\nabla}_{V}Df=-\widehat{\nabla}_{U} \widehat{Q}V+\frac{1}{2}(U \widehat{r})V+(\frac{\widehat{r}}{2}-\lambda)\widehat{\nabla}_{U}V.
\end{equation}
Interchanging $U$ and $V$ and then making use of the above equation in $\widehat{R}(U, V)Df=\widehat{\nabla}_{U} \widehat{\nabla}_{V}Df-\widehat{\nabla}_{V} \widehat{\nabla}_{U}Df-\widehat{\nabla}_{[U, V]}Df$, we infer
\begin{eqnarray}
\label{5.3}
\widehat{R}(U, V)Df&=&\frac{1}{2}[(U \widehat{r})V-(V \widehat{r})U]\nonumber\\&&
-(\widehat{\nabla}_{U}\widehat{Q})(V)+(\widehat{\nabla}_{V}\widehat{Q})(U).
\end{eqnarray}
Contracting the foregoing equation over $U$, we obtain
\begin{equation}
\label{5.4}
\widehat{\mathcal{S}}(V, Df)=-\frac{1}{2}(V \widehat{r}).
\end{equation}
Combining the last equation and (\ref{c5}), we get
\begin{equation}
\label{5.5}
-\frac{1}{2}(U\widehat{r})=(\frac{\widehat{r}}{2}+1)(U f)-(\frac{\widehat{r}}{2}-1)\mathfrak{\psi}(U)(\xi f).
\end{equation}
Setting $U=\xi$ in (\ref{5.5}), we have
\begin{equation}
\label{5.6}
(\xi f)=0,
\end{equation}
since $\xi \widehat{r}=0$.
Thus, from (\ref{5.5}), we acquire
\begin{equation}\label{5.7}
 -\frac{1}{2}(U\widehat{r})=(\frac{\widehat{r}}{2}+1)(U f).
\end{equation}
Now, from equation (\ref{5.3}) we obtain that
\begin{equation}\label{5.8}
    g(\widehat{R}(U,V)\xi, Df)=-\frac{1}{2}[\mathfrak{\psi}(U)(V \widehat{r})-\mathfrak{\psi}(V)(U \widehat{r})].
\end{equation}
Combining equation (\ref{c10}) and (\ref{5.8}), we lead
\begin{equation}\label{5.9}
\mathfrak{\psi}(V)(U f)-\mathfrak{\psi}(U)(V f)=-\frac{1}{2}[\mathfrak{\psi}(U)(V \widehat{r})-\mathfrak{\psi}(V)(U \widehat{r})].
\end{equation}
Putting $V=\xi$ in the last equation yields
\begin{equation}\label{5.10}
(U f)=-\frac{1}{2}(U \widehat{r}).
\end{equation}
Using (\ref{5.10}) in (\ref{5.7}) we find that
\begin{equation}\label{5.11}
\frac{\widehat{r}}{2}(U f)=0.
\end{equation}
Hence, either $\widehat{r} =0$ or $\widehat{r} \ne 0$.\par
If $\widehat{r} =0$, then from (\ref{b10}) we acquire that $r=2$. Therefore, $N^3$ is of constant scalar curvature.\par
Next, we suppose that $\widehat{r} \ne 0$, that is, $(U f)=0$, which implies $f$ is a constant. Then, equation (\ref{5.1}) reveals that $N^3$ is an Einstein manifold. Hence, $N^3$ is of constant sectional curvature, since the manifold is of dimension $3$,.\par
Thus, we state the subsequent:
\begin{theorem}
\label{thm5.1}
 If the Riemannian metric of $N^3$ with a $SSNMC$ is a gradient Einstein soliton, then $N^{3}$ is either a manifold of constant scalar curvature or a manifold of constant sectional curvature with respect to the $SSNMC$.
\end{theorem}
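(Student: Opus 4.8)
The plan is to imitate the strategy that already produced Theorems \ref{tc1} and \ref{thm4.1}: rewrite the defining equation (\ref{a7}) in operator form, differentiate it once covariantly, and substitute into the curvature commutator to obtain a workable expression for $\widehat{R}(U,V)Df$. Concretely, (\ref{a7}) gives the first-order identity (\ref{5.1}), $\widehat{\nabla}_{V}Df=-\widehat{Q}V+\frac{\widehat{r}}{2}V-\lambda V$. Differentiating this along $U$, antisymmetrizing in $U,V$, and plugging into $\widehat{R}(U,V)Df=\widehat{\nabla}_{U}\widehat{\nabla}_{V}Df-\widehat{\nabla}_{V}\widehat{\nabla}_{U}Df-\widehat{\nabla}_{[U,V]}Df$ should yield the master identity (\ref{5.3}), $\widehat{R}(U,V)Df=\frac{1}{2}[(U\widehat{r})V-(V\widehat{r})U]-(\widehat{\nabla}_{U}\widehat{Q})(V)+(\widehat{\nabla}_{V}\widehat{Q})(U)$. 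Everything downstream is extracted from this single identity.

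From the master identity I would read off two independent scalar relations. First, contracting over $U$ should collapse the right-hand side to $-\frac{1}{2}(V\widehat{r})$, giving (\ref{5.4}), $\widehat{\mathcal{S}}(V,Df)=-\frac{1}{2}(V\widehat{r})$. Second, evaluating the curvature in the $\xi$-direction through (\ref{b11}) and pairing with $Df$ produces, on one side via (\ref{c10}), $g(\widehat{R}(U,V)\xi,Df)=\mathfrak{\psi}(V)(Uf)-\mathfrak{\psi}(U)(Vf)$, and on the other side the $\widehat{r}$-valued expression (\ref{5.8}), $g(\widehat{R}(U,V)\xi,Df)=-\frac{1}{2}[\mathfrak{\psi}(U)(V\widehat{r})-\mathfrak{\psi}(V)(U\widehat{r})]$. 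The essential external input is the explicit description of the Ricci operator recorded in (\ref{b17}), which instantly gives $\widehat{\mathcal{S}}(U,Df)=(\frac{\widehat{r}}{2}+1)(Uf)-(\frac{\widehat{r}}{2}-1)\mathfrak{\psi}(U)(\xi f)$ as in (\ref{c5}).

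Combining these is then routine bookkeeping of the kind already performed in Sections 4 and 5. Equating the two formulas for $\widehat{\mathcal{S}}(\cdot,Df)$ and setting $U=\xi$, the Lemma (\ref{b14}) ($\xi\widehat{r}=0$) forces $\xi f=0$, which removes the last term and leaves (\ref{5.7}), $-\frac{1}{2}(U\widehat{r})=(\frac{\widehat{r}}{2}+1)(Uf)$. Equating the two formulas for $g(\widehat{R}(U,V)\xi,Df)$ and putting $V=\xi$ (again using $\xi f=\xi\widehat{r}=0$) yields the decisive relation (\ref{5.10}), $(Uf)=-\frac{1}{2}(U\widehat{r})$. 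Substituting (\ref{5.10}) into (\ref{5.7}) collapses everything to $\frac{\widehat{r}}{2}(Uf)=0$, and the dichotomy is immediate: if $\widehat{r}=0$ then (\ref{b10}) gives $r=2$, so $N^{3}$ has constant scalar curvature; otherwise $Uf=0$ for every $U$, hence $f$ is constant, and (\ref{5.1}) reduces to $\widehat{Q}V=(\frac{\widehat{r}}{2}-\lambda)V$, i.e. $N^{3}$ is Einstein with respect to $\widehat{\nabla}$ and therefore of constant sectional curvature in dimension three (via (\ref{b17})).

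I expect the genuine technical obstacle to be the contraction producing (\ref{5.4}). Since $\widehat{\nabla}$ is non-metric, the traces of the two $\widehat{\nabla}\widehat{Q}$ terms cannot be disposed of by the ordinary contracted second Bianchi identity; one must instead control $\mathrm{div}\,\widehat{Q}$ by hand, and this is exactly where I would exploit the closed form of $\widehat{Q}$ in (\ref{b17}) together with its derivative (\ref{b18}), which render every divergence term explicitly computable and deliver the clean coefficient $-\frac{1}{2}$. A secondary point requiring care is the passage from (\ref{5.3}) to (\ref{5.8}): because the non-metric curvature need not be skew in its last two arguments, the $\xi$-direction evaluation must be justified directly from (\ref{b11}) rather than from a symmetry of $\widehat{R}$. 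Once these two bookkeeping hurdles are cleared, the remaining algebra reproduces the argument of the preceding sections verbatim.
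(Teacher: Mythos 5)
Your proposal reproduces the paper's own argument essentially verbatim: the same first-order identity (\ref{5.1}), the same commutator computation leading to (\ref{5.3}), the same two contractions giving (\ref{5.4}) and (\ref{5.8}), the same comparison with (\ref{c5}) and (\ref{c10}) to force $\xi f=0$ and then $\frac{\widehat{r}}{2}(Uf)=0$, and the same concluding dichotomy. It is correct and takes the same route as the paper.
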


\section{Gradient $m$-quasi Einstein solitons on $N^3$ with a $SSNMC$}
Here, we investigate the $m$-quasi Einstein metric on $N^3$ with a $SSNMC$. Initially, we prove the following Lemma
\begin{lemma}\label{lem1}
In $N^3$, we have the following:
\begin{eqnarray}\label{6.1}
\widehat{R}(U,V)D f &=& (\widehat{\nabla}_{V}\widehat{Q})U-(\widehat{\nabla}_{U} \widehat{Q})V+\frac{\lambda}{m}\{ (V f)U-(U f)V\}\nonumber\\&&
+\frac{1}{m}\{ (U f)\widehat{Q} V-(V f)\widehat{Q} U \},
\end{eqnarray}
for all $U, \, V \in \mathfrak{X}(M)$.
\end{lemma}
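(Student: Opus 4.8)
The plan is to follow the template of Sections~$4$ and~$6$, in which the defining soliton equation is first recast in operator form and then fed into the curvature identity, the only novelty here being the quadratic term $\frac{1}{m}\,df\otimes df$ in~(\ref{a8}). First I would convert~(\ref{a8}) into operator form. Using $(\widehat{\nabla}^2 f)(V,U)=g(\widehat{\nabla}_V Df,U)$, $\widehat{\mathcal{S}}(V,U)=g(\widehat{Q}V,U)$ and $(Vf)(Uf)=g\big((Vf)Df,U\big)$, equation~(\ref{a8}) is equivalent to
\begin{equation*}
\widehat{\nabla}_V Df=-\widehat{Q}V+\lambda V+\frac{1}{m}(Vf)Df,
\end{equation*}
which is the $m$-quasi analogue of~(\ref{c1}) and~(\ref{5.1}).

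Next I would differentiate this relation covariantly along $U$. Applying the Leibniz rule to $\widehat{Q}V$ and to the product $(Vf)Df$ gives
\begin{equation*}
\widehat{\nabla}_U\widehat{\nabla}_V Df=-(\widehat{\nabla}_U\widehat{Q})V-\widehat{Q}(\widehat{\nabla}_U V)+\lambda\,\widehat{\nabla}_U V+\frac{1}{m}\big(U(Vf)\big)Df+\frac{1}{m}(Vf)\,\widehat{\nabla}_U Df ,
\end{equation*}
after which I would re-insert the first-order identity for the trailing factor $\widehat{\nabla}_U Df$. Interchanging $U$ and $V$ and substituting both expressions into
\begin{equation*}
\widehat{R}(U,V)Df=\widehat{\nabla}_U\widehat{\nabla}_V Df-\widehat{\nabla}_V\widehat{\nabla}_U Df-\widehat{\nabla}_{[U,V]}Df
\end{equation*}
is the core step, run exactly as in the passage from~(\ref{5.2}) to~(\ref{5.3}).

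The target identity~(\ref{6.1}) then emerges from a sequence of cancellations. The first-order connection terms $-\widehat{Q}(\widehat{\nabla}_U V-\widehat{\nabla}_V U)$ and $\lambda(\widehat{\nabla}_U V-\widehat{\nabla}_V U)$ reorganize against $-\widehat{\nabla}_{[U,V]}Df$ precisely as in the derivation of~(\ref{5.3}), leaving only the Ricci-derivative part $(\widehat{\nabla}_V\widehat{Q})U-(\widehat{\nabla}_U\widehat{Q})V$. Re-inserting the first-order identity inside $\frac{1}{m}(Vf)\,\widehat{\nabla}_U Df$ produces $-\frac{1}{m}(Vf)\widehat{Q}U+\frac{\lambda}{m}(Vf)U+\frac{1}{m^2}(Vf)(Uf)Df$; antisymmetrizing in $U,V$ converts the first two into the two genuinely new terms $\frac{1}{m}\{(Uf)\widehat{Q}V-(Vf)\widehat{Q}U\}$ and $\frac{\lambda}{m}\{(Vf)U-(Uf)V\}$ appearing in~(\ref{6.1}).

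The delicate point, and the step I expect to demand the most care, is the bookkeeping of the purely $Df$-directed contributions generated by differentiating the Hessian. The coefficient terms assemble into $\frac{1}{m}\big(U(Vf)-V(Uf)-[U,V]f\big)Df$, which vanishes identically by the definition of the bracket acting on $f$, while the quadratic pieces $\frac{1}{m^2}(Vf)(Uf)Df$ cancel upon antisymmetrization since $(Uf)(Vf)$ is symmetric. Confirming that these are the \emph{only} $Df$-proportional terms, so that nothing survives outside the clean tensorial expression~(\ref{6.1}), is where the computation must be carried out attentively.
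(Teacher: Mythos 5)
Your proposal reproduces the paper's own proof of Lemma \ref{lem1}: the operator form of (\ref{a8}) that you write down is exactly the paper's equation (\ref{6.2}), and the covariant differentiation, interchange of $U$ and $V$, and substitution into $\widehat{R}(U,V)Df=\widehat{\nabla}_U\widehat{\nabla}_V Df-\widehat{\nabla}_V\widehat{\nabla}_U Df-\widehat{\nabla}_{[U,V]}Df$ are precisely the paper's steps (\ref{6.3})--(\ref{6.5}). The cancellation bookkeeping you describe (the $U(Vf)-V(Uf)-[U,V]f$ term and the symmetric $\frac{1}{m^{2}}(Uf)(Vf)Df$ term dropping out, and the re-insertion of the first-order identity producing the $\frac{\lambda}{m}$ and $\frac{1}{m}\widehat{Q}$ terms) is exactly what the paper carries out, so the two arguments coincide.
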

\begin{proof}
Let the Riemannian metric of $N^3$ with a $SSNMC$ be a $m$-quasi Einstein metric. Therefore, the equation (\ref{a8}) can be represented as
\begin{equation}\label{6.2}
\widehat{\nabla}_{U}D f=-\widehat{Q} U+\frac{1}{m}g(U,D f)D f+\lambda U.
\end{equation}
Covariant derivative of (\ref{6.2}) along $V$ yields
\begin{eqnarray}\label{6.3}
\widehat{\nabla}_{V}\widehat{\nabla}_{U}D f &=& -\widehat{\nabla}_{V}\widehat{Q} U+ \frac{1}{m} \widehat{\nabla}_{V}g(U,D f)D f\nonumber\\&& +\frac{1}{m} g(U,D f)\widehat{\nabla}_{V}D f+ \lambda \widehat{\nabla}_{V}U.
\end{eqnarray}
Exchanging $U$ and $V$ in (\ref{6.3}), we obtain
\begin{eqnarray}\label{6.4}
\widehat{\nabla}_{U}\widehat{\nabla}_{V}D f &=& -\widehat{\nabla}_{U}\widehat{Q}  V+ \frac{1}{m}\widehat{\nabla}_{U}g(V,D f)D f\nonumber\\&& +\frac{1}{m} g(V,D f)\widehat{\nabla}_{U}D f +\lambda \widehat{\nabla}_{U}V
\end{eqnarray}
and
\begin{equation} \label{6.5}
\widehat{\nabla}_{[U,V]}D f = -\widehat{Q}[U,V]+ \frac{1}{m}g([U,V],D f)D f+\lambda [U,V].
\end{equation}
Utilizing  (\ref{6.2})-(\ref{6.5}) and the relation $\widehat{R}(U, V)Df=\widehat{\nabla}_{U} \widehat{\nabla}_{V}Df-\widehat{\nabla}_{V} \widehat{\nabla}_{U}Df-\widehat{\nabla}_{[U, V]}Df$, we lead
\begin{eqnarray}
\widehat{R}(U,V)D f &=& (\widehat{\nabla}_{V}\widehat{Q})U-(\widehat{\nabla}_{U} \widehat{Q})V+\frac{\lambda}{m}\{ (V f)U-(U f)V\}\nonumber\\&&
+\frac{1}{m}\{ (U f)\widehat{Q} V-(V f)\widehat{Q} U \}.\nonumber
\end{eqnarray}
\end{proof}
Now contracting the equation (\ref{6.1}) over $U$, we obtain
\begin{equation}
\label{6.6}
\widehat{\mathcal{S}}(V, Df)=\frac{1}{2}(V \widehat{r})+\frac{2\lambda}{m}(V f)-\frac{1}{m}\{ (\frac{\widehat{r}}{2}+3)(V f)+(\frac{\widehat{r}}{2}-1)(\xi f)\mathfrak{\psi}(V)\}.
\end{equation}

Combining (\ref{6.6}) and (\ref{c5}), we have
\begin{eqnarray}
\label{6.7}
&&\frac{1}{2}(V \widehat{r})+\frac{2\lambda}{m}(V f)-\frac{1}{m}\{ (\frac{\widehat{r}}{2}+3)(V f)+(\frac{\widehat{r}}{2}-1)(\xi f)\mathfrak{\psi}(V)\}\nonumber\\&&
=(\frac{\widehat{r}}{2}+1)(V f)-(\frac{\widehat{r}}{2}-1)\mathfrak{\psi}(V)(\xi f).
\end{eqnarray}
Setting $V=\xi$ in (\ref{6.7}), we obtain
\begin{equation}
\label{6.8}
(2m+\widehat{r}-2\lambda+2)(\xi f)=0,
\end{equation}
since $\xi \widehat{r}=0$.

Now, from equation (\ref{6.1}) we have
\begin{equation}\label{6.10}
    g(\widehat{R}(U,V)\xi, Df)=(\frac{\lambda}{m}-\frac{2}{m})[\mathfrak{\psi}(V)(U f)-\mathfrak{\psi}(U)(V f)].
\end{equation}
Combining equation (\ref{c10}) and (\ref{6.10}), we find that
\begin{equation}\label{6.11}
\mathfrak{\psi}(V)(U f)-\mathfrak{\psi}(U)(V f)=(\frac{\lambda}{m}-\frac{2}{m})[\mathfrak{\psi}(V)(U f)-\mathfrak{\psi}(U)(V f)].
\end{equation}
Putting $V=\xi$ in the foregoing equation yields
\begin{equation}\label{6.12}
(\lambda-m-2)(U f)=0,
\end{equation}
where we have used $\xi f=0$.\par
Hence, either $(\lambda-m-2) = 0$ or $(\lambda-m-2) \ne 0$.\par
If $(\lambda-m-2) = 0$, then we get $\lambda=m+2=$ positive integer. Hence, the gradient $m$-quasi Einstein soliton is expanding.\par
If we suppose that $(\lambda-m-2) \ne 0$, then $(U f)=0$, which implies $f$ is a constant. Then, equation (\ref{6.1}) reveals that $N^3$ is an Einstein manifold. Hence, $N^3$ is of constant sectional curvature, since the manifold is of dimension $3$,.\par
Hence, we state:
\begin{theorem}
\label{thm6.2}
If the Riemannian metric of $N^3$ with a $SSNMC$ is a gradient $m$-quasi Einstein soliton, then either the soliton is expanding or it is a manifold of constant sectional curvature with respect to the $SSNMC$, provided $(2m+\widehat{r}-2\lambda+2) \ne 0$.
\end{theorem}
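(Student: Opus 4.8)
The plan is to convert the gradient $m$-quasi Einstein condition into two independent scalar relations — one obtained by tracing the curvature identity of Lemma \ref{lem1}, the other obtained by pairing that same identity against the parallel field $\xi$ — and then to run a dichotomy on the resulting algebraic factors. Throughout I would lean on the explicit description (\ref{b17}) of $\widehat{Q}$, the universal identity (\ref{c5}) for $\widehat{\mathcal{S}}(\cdot,Df)$ valid on any such $N^3$, the fact (\ref{b14}) that $\xi\widehat{r}=0$, the contraction data (\ref{b13}), and the identity (\ref{b11}) computing $\widehat{R}(U,V)\xi$.

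First I would rewrite the defining equation (\ref{a8}) in the Hessian form (\ref{6.2}), $\widehat{\nabla}_{U}Df=-\widehat{Q}U+\frac{1}{m}(Uf)Df+\lambda U$, and feed it into the curvature definition to produce the identity (\ref{6.1}). Contracting (\ref{6.1}) over $U$ gives an expression for $\widehat{\mathcal{S}}(V,Df)$, namely (\ref{6.6}), in which the trace of the divergence term $(\widehat{\nabla}_{V}\widehat{Q})U-(\widehat{\nabla}_{U}\widehat{Q})V$ is evaluated from the explicit form of $\widehat{Q}$ while the final bracket supplies the $\frac{1}{m}$-contributions. Matching (\ref{6.6}) against (\ref{c5}) yields the combined relation (\ref{6.7}); setting $V=\xi$ there and using $\xi\widehat{r}=0$ collapses everything to the single factored equation (\ref{6.8}), $(2m+\widehat{r}-2\lambda+2)(\xi f)=0$. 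The stated hypothesis $(2m+\widehat{r}-2\lambda+2)\ne 0$ is exactly what is needed at this juncture to conclude $\xi f=0$.

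Next I would extract the second scalar relation. Taking $Y=\xi$ in (\ref{6.1}) and using (\ref{b11}) to compute $\widehat{R}(U,V)\xi$ gives $g(\widehat{R}(U,V)\xi,Df)$ as in (\ref{6.10}); comparing this with the pairing identity (\ref{c10}) produces (\ref{6.11}), and putting $V=\xi$ while invoking the just-established $\xi f=0$ reduces it to the factored equation (\ref{6.12}), $(\lambda-m-2)(Uf)=0$ for all $U$. The theorem is then immediate from this dichotomy: if $\lambda-m-2=0$ then $\lambda=m+2$ is a positive integer, so $\lambda>0$ and the soliton is expanding; otherwise $Uf=0$ for every $U$, hence $f$ is constant, and substituting $Df=0$ back into (\ref{6.2}) gives $\widehat{Q}U=\lambda U$, i.e.\ $N^3$ is Einstein with respect to $\widehat{\nabla}$, which in dimension three forces constant sectional curvature.

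The main obstacle is the contraction step (\ref{6.1})$\to$(\ref{6.6}): because $\widehat{\nabla}$ is non-metric, tracing the term $(\widehat{\nabla}_{V}\widehat{Q})U-(\widehat{\nabla}_{U}\widehat{Q})V$ cannot be done by a naive metric trace, so one must instead substitute the explicit expression (\ref{b17}) for $\widehat{Q}$ (and its derivative) and carry along the non-metricity and $\psi$-terms exactly, using (\ref{b13}) and $\xi\widehat{r}=0$ to simplify. Once this trace is computed correctly the remaining steps are linear algebra, and the role of the provision becomes transparent: it is precisely the non-degeneracy condition that upgrades (\ref{6.8}) to $\xi f=0$, without which the reduction (\ref{6.11})$\to$(\ref{6.12}) would not close.
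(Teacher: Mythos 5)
Your proposal follows the paper's own argument essentially step for step: the Hessian form (\ref{6.2}), the curvature identity of Lemma \ref{lem1}, the contraction to (\ref{6.6}) and comparison with (\ref{c5}) giving $(2m+\widehat{r}-2\lambda+2)(\xi f)=0$, then the pairing against $\xi$ via (\ref{b11}) and (\ref{c10}) leading to $(\lambda-m-2)(Uf)=0$ and the final dichotomy. It is correct, and in fact slightly cleaner than the paper in that you state explicitly that the hypothesis $(2m+\widehat{r}-2\lambda+2)\ne 0$ is what forces $\xi f=0$, a point the paper leaves implicit.
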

\section{acknowledgement}
We would like to thank the referee for reviewing the paper carefully and his or her valuable comments to improve the quality of the paper.

% IF YOU USE BIBTEX, THE FOLLOWING STYLE SHOULD BE USED
\bibliographystyle{kjmath}
% INSERT THE NAME OF YOUR BIB FILE BELOW
%\bibliography{bibliography_file}

% BEFORE SUBMITTING YOUR MANUSCRIPT PLEASE COPY THE CONTENT OF THE BBL FILE THAT LATEX GENERATES HERE (IT WILL BE IN THE SAME DIRECTORY AS YOUR LATEX FILE) AND COMMENT THE PREVIOUS TWO LINES (\bibliographystyle and \bibliography), FOR EXAMPLE:

\end{document}